\definecolor{webgreen}{rgb}{0,.5,0}
\definecolor{webbrown}{rgb}{.6,0,0}
\theoremstyle{plain}
\newtheorem{theorem}{Theorem}
\newtheorem{lemma}[theorem]{Lemma}
\newtheorem{proposition}[theorem]{Proposition}
\theoremstyle{remark}
\begin{document}

\begin{center}

\Large{\bf Honeycombs in the Pascal triangle and beyond}
\normalsize

{\sc Matthew Blair, Rigoberto Fl\'orez, and Antara Mukherjee} \footnote {Several of the main results in this paper were found by the first author while working on their
undergraduate research project under the guidance of the second and third authors (who followed the guidelines given  in \cite{FlorezMukherjeeED}).}

{\sc The Citadel }

\footnotesize

 {\tt blairm@citadel.edu, rigo.florez@citadel.edu, antara.mukherjee@citadel.edu}
\normalsize


\end{center}


\footnotesize {\it Abstract:} {
In this paper we present a geometric approach to discovering some known and some new identities using triangular arrays. Our main aim is to demonstrate how to use the geometric patterns (by Carlitz), in the Pascal and Hosoya triangles to 
rediscover some classical identities and integer sequences. Therefore, we use new techniques in classical settings which then provide a new perspective in undergraduate research. }


\section {Introduction}
This paper was partially published in  \emph{Math Horizons}, February 2022, (see \cite{Blair1}). In this version we include the proofs of the results given in \cite{Blair1}. 

The idea of this paper is to engage students ---with no experience in undergraduate research--- in a research project where they can explore geometric patterns in different triangular arrays. In many cases, as we see in this paper, the experiments the students' conduct lead to discovery of well-known combinatorial or Fibonacci identities and in certain rare occasions a new identity.

The experiments presented in this paper are based on a honeycomb pattern in the Pascal and Hosoya triangles where the hexagonal tiles are constructed  
using the entries of the triangles as vertices, (see Figure \ref{alternatingpatterns:2}). The honeycomb pattern embedded in the Hosoya triangle and the  
patterns in Figure \ref{carlitz_patterns:1} (called Carlitz's patterns, \cite{Blair1}) have been explored by Blair et al. already and recently the results obtained from these experiments were  
published recently  in the article titled ``Honeycombs in the Hosoya Triangle", \cite{Blair1}.
The techniques presented here helps the students use geometry to learn about combinatorics and several integer sequences  present in OEIS \cite{sloane}.
 The aim of this paper is to help the students acquire the skills in undergraduate research discovering new and known identities using geometry instead  
 of classic combinatorics. The triangles and the geometric patterns have proved to be an effective tool in this research which led to the discovery of some 
 new identities, rediscovery of some well-known identities and integer sequences present in OEIS \cite{sloane}. Therefore, new techniques used in classical settings can often provide a new perspective in undergraduate research.

\begin{figure}[htbp]
\begin{center}
\includegraphics[width=16cm]{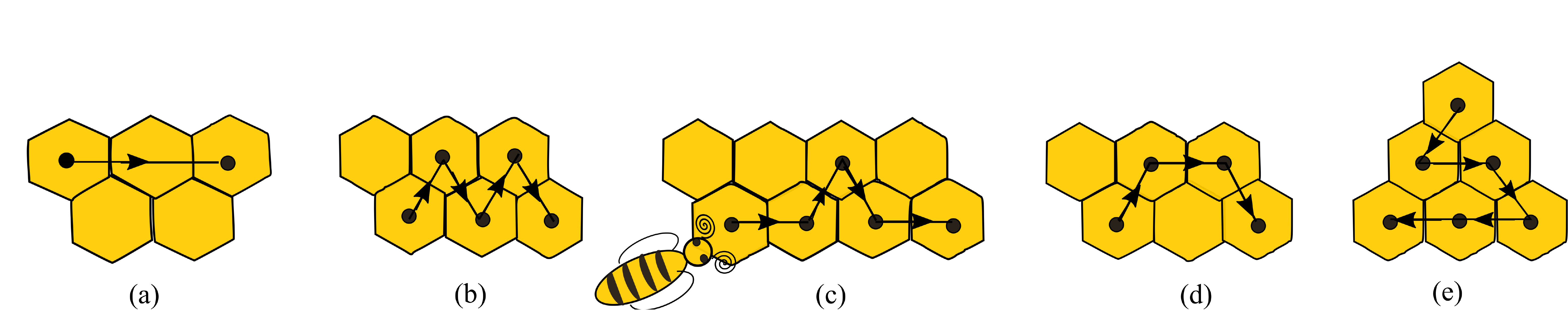}
\caption{Carlitz's patterns.}
\label{carlitz_patterns:1}
\end{center}
\end{figure}

 Note that the honeycomb pattern seen in Figure \ref{alternatingpatterns:2}, is different from the one normally associated with the Pascal triangle.
Furthermore, using the cells and the movement of the bees through these cells as given by Carlitz (see \cite{koshy} or Figure \ref{carlitz_patterns:1}), we obtain different interpretations of some well-known binomial identities associated with the Pascal 
and Fibonacci identities associated with the Hosoya triangle. One of the properties that was previously obtained by using these hexagonal tiles in both the Pascal and the Hosoya triangle is called
the Gould's identity or Star of David theorem (see \cite{florezjunes} and \cite{gould}).  In case of the Pascal triangle, the identity is obtained by showing the greatest common divisor ($\gcd$) of three alternating vertices of a hexagonal tile is equal to the $\gcd$ of the remaining three alternating vertices. On the other hand,
 in the Hosoya triangle a parallel identity is obtained by considering six (hexagonal) tiles forming a hexagonal pattern and showing that the $\gcd$ of the entries present inside three alternating tiles is the same as that of the three remaining hexagonal tiles in the pattern.

In Section \ref{other:triangle} we also provide examples of three other triangular arrays, namely the Lucas triangle, Josef's triangle, and Leibniz's harmonic triangle. Some of the patterns seen in Figure \ref{carlitz_patterns:1} 
can be explored in these triangles as well therefore providing undergraduate students new directions in research.

\section{Carlitz's patterns in the Pascal triangle}\label{pascal:triangle}

In this section we use Carlitz's patterns seen in Figure \ref{carlitz_patterns:1}  to rediscover some well-known binomial identities. The patterns in Figure \ref{carlitz_patterns:1} (a)--(d) can also be seen as Patterns 1, 2, in Figure \ref{hex_patterns:1}(a) and Patterns 3, 4 in Figure \ref{hex_patterns:1}(b).
Note that the identity present in Proposition \ref{propo:3} can only be found partially in literature while the identity in Theorem \ref{theorem:4} is new.

 Here we observe that the hexagonal tiles used in the Pascal triangle is constructed using the entires of the triangle in every other row as shown in Figure \ref{hex_patterns:1}. In this figure, each dot is an entry of the triangle.
This setup of the hexagonal tiles can also be seen in Figure \ref{alternatingpatterns:2}.
This construction of the honeycomb pattern can also be seen in Figure \ref{alternatingpatterns:2}. 

Based on this honeycomb pattern we can construct another triangular array with entries of the from each hexagonal cells. These entries are of the form $\binom{2k}{i}$ where $k>0$ and $i$ is odd.
The entries of this triangular array are $ 2,  4, 4, 6, 20, 6, 8, 56, 8,\ldots$. We leave it to the readers to conduct 
\begin{figure}[htbp]
\includegraphics[scale=0.30]{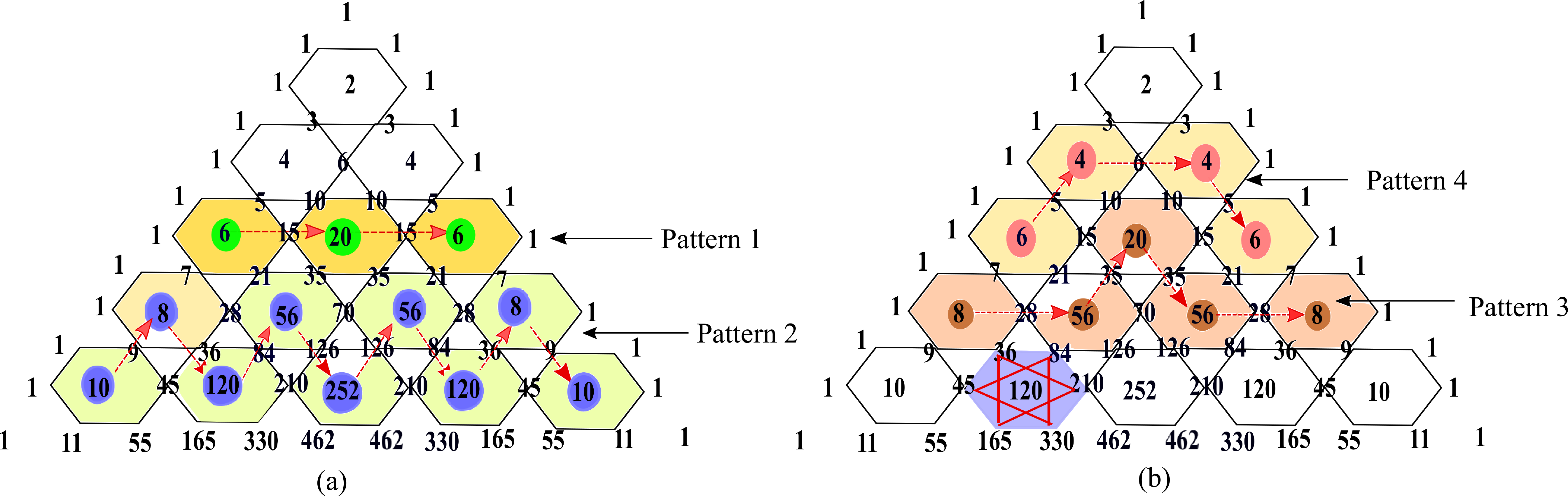} 
\caption{Hexagonal cells in the triangles and patterns.}
\label{hex_patterns:1}
\end{figure}

The first identity in Proposition \ref{propo:1}\eqref{part:a} is obtained by using Pattern 1 in Figure \ref{hex_patterns:1}(a)  (or Carlitz's pattern, Figure \ref{carlitz_patterns:1}(b)), in Pascal's triangle.
For example, using Figure \ref{hex_patterns:1} (a), Pattern 1, we have that  $$\underbrace{\binom{6}{1}}_{\color{blue}{\textbf a}}+ \underbrace{\binom{6}{3}}_{\color{blue}{\textbf b}} +\underbrace{\binom{6}{5}}_{\color{blue}{\textbf c}}= 2^5.$$ 
This identity is also clear from Figure \ref{alternatingpatterns:2}.

The second identity in Proposition \ref{propo:1}\eqref{part:b} is obtained by using Pattern 2 in Figure \ref{hex_patterns:1}(a)  (or Carlitz's pattern, Figure \ref{carlitz_patterns:1}(b)), in Pascal's triangle.
For example, using in Pattern 2 we have that

 $$\underbrace{\binom{10}{1}}_{\color{blue}{\textbf d}}+\underbrace{\binom{8}{1}}_{\color{blue}{\textbf e}}+\underbrace{\binom{10}{3}}_{\color{blue}{\textbf f}}+\underbrace{\binom{8}{3}}_{\color{blue}{\textbf g}}+\underbrace{\binom{10}{5}}_{\color{blue}{\textbf h}}+\underbrace{\binom{8}{5}}_{\color{blue}{\textbf i}}+\underbrace{\binom{10}{7}}_{\color{blue}{\textbf j}}+\underbrace{\binom{8}{7}}_{\color{blue}{\textbf k}}+\underbrace{\binom{10}{9}}_{\color{blue}{\textbf l}}=5(2^7) .$$
 
 Therefore, in general, we obtain the following identities.
\begin{proposition}\label{propo:1}
If $n=2k$ for $k>0$, then the following hold
\begin{enumerate}[(a)]
\item \label{part:a} $\displaystyle \sum_{i=1}^{k}\binom{2k}{2i-1}=2^{2k-1}$,
\item \label{part:b} $\displaystyle \sum_{i=1}^{k}\binom{2k}{2i-1}+\sum_{i=1}^{k-1}\binom{2k-2}{2i-1}=5(2^{2k-3})$. 
\end{enumerate}
\end{proposition}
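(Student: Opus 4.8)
The plan is to derive both identities from the elementary binomial sum $\sum_{i=0}^{n}\binom{n}{i}=2^{n}$ together with the alternating-sign companion $\sum_{i=0}^{n}(-1)^{i}\binom{n}{i}=0$, which holds for every $n>0$. For part \eqref{part:a}, I would add these two identities with $n=2k$: the terms with even index $i$ double, the terms with odd index cancel in the second sum but survive with the same sign in the first, so adding instead gives $\sum_{i\text{ even}}\binom{2k}{i}+\sum_{i\text{ odd}}\binom{2k}{i}=2^{2k}$ while subtracting the alternating identity from the full sum isolates the odd-index terms; more directly, subtracting $\sum(-1)^i\binom{2k}{i}=0$ from $\sum\binom{2k}{i}=2^{2k}$ yields $2\sum_{i\text{ odd}}\binom{2k}{i}=2^{2k}$, i.e. $\sum_{i=1}^{k}\binom{2k}{2i-1}=2^{2k-1}$, which is exactly \eqref{part:a}. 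This is the standard ``sum over odd rows equals half the total'' fact, and it requires only that $2k>0$.

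For part \eqref{part:b}, the idea is to apply \eqref{part:a} twice, once at level $2k$ and once at level $2k-2=2(k-1)$, noting that the latter application is legitimate precisely because $k>0$ forces $k-1\ge 0$; one should remark that when $k=1$ the second sum is empty and the claimed value $5(2^{-1})$ is not an integer, so implicitly the interesting range is $k\ge 2$ (or one reads the $k=1$ case as $\tfrac52$, consistent with $\binom{2}{1}=2$ plus an empty sum — here a short sanity check on the indexing is worthwhile). Assuming $k\ge 2$, \eqref{part:a} gives $\sum_{i=1}^{k}\binom{2k}{2i-1}=2^{2k-1}$ and $\sum_{i=1}^{k-1}\binom{2k-2}{2i-1}=2^{2k-3}$, so their sum is $2^{2k-1}+2^{2k-3}=2^{2k-3}(2^{2}+1)=5\cdot 2^{2k-3}$, which is \eqref{part:b}. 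Thus part \eqref{part:b} is a one-line corollary of part \eqref{part:a} once the two levels are summed and the common factor $2^{2k-3}$ is pulled out.

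The main (and really only) obstacle here is bookkeeping rather than mathematics: one must be careful that the geometric honeycomb description in Figure \ref{hex_patterns:1} genuinely corresponds to the index set $\{1,3,5,\dots,2k-1\}$ in each of the two relevant rows, and that no vertex of the hexagonal tiling is double-counted or omitted at the boundary of the triangle. I would therefore open the proof by translating the picture into the two sums written in the statement — verifying on the displayed examples ($\binom{6}{1}+\binom{6}{3}+\binom{6}{5}=2^{5}$ and the nine-term sum equal to $5\cdot 2^{7}$, i.e.\ the cases $k=3$ and $k=5$) — and then invoke the two binomial facts above. No deeper identity (Vandermonde, generating functions, induction) is needed; the alternating-sum identity does all the work, and the rest is algebra with powers of $2$.
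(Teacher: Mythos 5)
Your proof is correct and follows essentially the same route as the paper, which simply notes that both parts ``follow directly'' from Lemma \ref{lemma:1}\eqref{lemma:parta}--\eqref{lemma:partb}: the core fact in each case is that the odd-index entries of row $n$ sum to $2^{n-1}$, and part \eqref{part:b} is then the sum $2^{2k-1}+2^{2k-3}=5\cdot 2^{2k-3}$ of two instances of part \eqref{part:a}, exactly as you do. The only (immaterial) difference is that you isolate the odd-index terms via the alternating identity $\sum_i(-1)^i\binom{2k}{i}=0$ rather than via Pascal's rule, and your side remark about $k=1$ is a genuine catch: there part \eqref{part:b} reads $2=5/2$, so the proposition should really require $k\ge 2$, a restriction the paper does not state.
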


\begin{figure}[htbp]
\begin{center}
\includegraphics[scale=0.3]{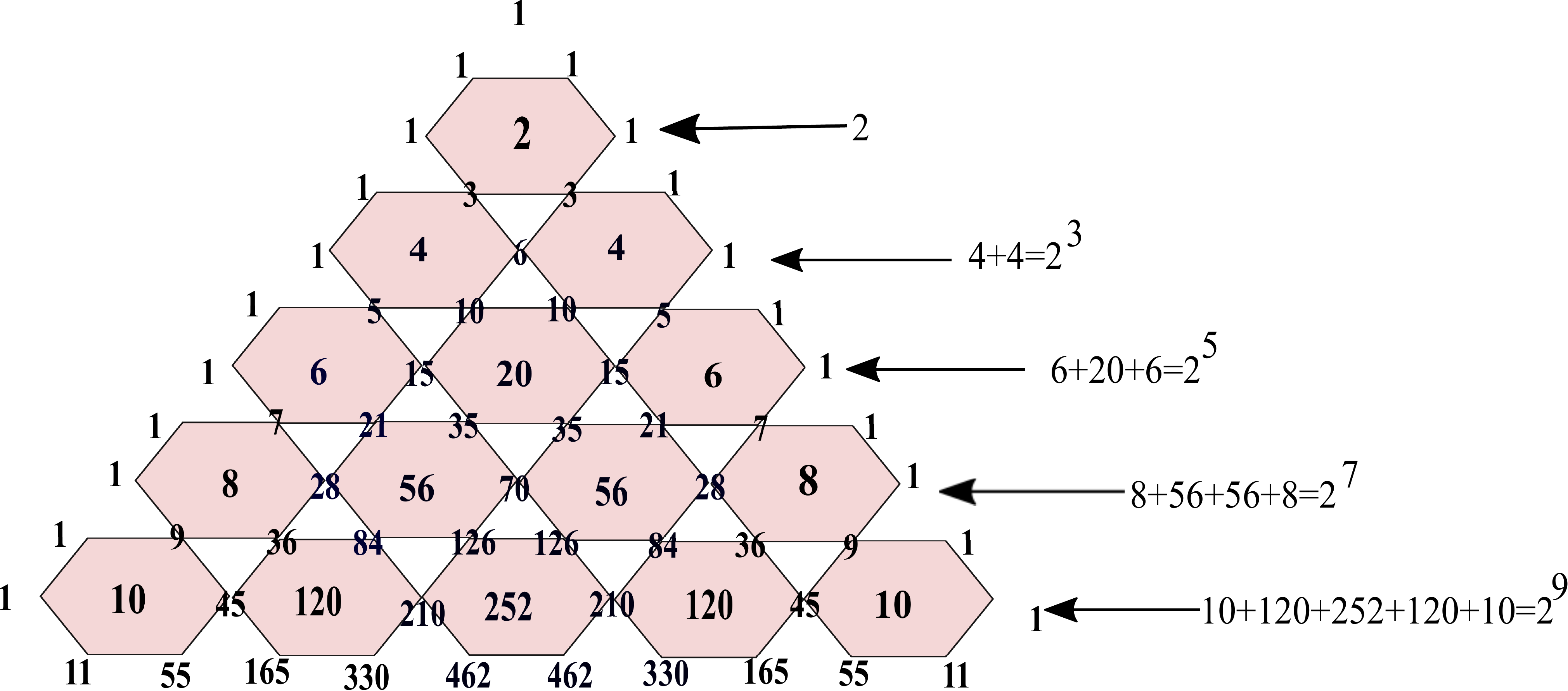}
\caption{ Power of 2 in Pascal triangle.}
\end{center}
\label{alternatingpatterns:2}
\end{figure}

The identity in Proposition \ref{propo:3} is obtained by using Pattern 3 in Figure \ref{hex_patterns:1}(b). For example, using this pattern we have that
$$\underbrace{\binom{12}{1}}_{\color{blue}{\textbf s}}+\underbrace{\binom{12}{3}}_{\color{blue}{\textbf t}}+\underbrace{\binom{10}{3}}_{\color{blue}{\textbf u}}+\underbrace{\binom{12}{5}}_{\color{blue}{\textbf v}}+\underbrace{\binom{12}{7}}_{\color{blue}{\textbf w}}+\underbrace{\binom{10}{7}}_{\color{blue}{\textbf x}}+\underbrace{\binom{12}{9}}_{\color{blue}{\textbf y}}+\underbrace{\binom{12}{11}}_{\color{blue}{\textbf z}}=2^4[9(2^4)-1]=2288.$$ 

\begin{proposition}\label{propo:3}
If $n\ge 2$ is any positive integer, then
\[\sum_{i=1}^{2n}\binom{4n}{2i-1}+\sum_{i=1}^{n-1}\binom{4n-2}{4i-1}=
\begin{cases}
2^{2n-2}[9 (2^{2n-2})+1], & \text{ if  $n$  is even};\\
2^{2n-2}[9 (2^{2n-2})-1], & \text{ if  $n$  is odd}.
\end{cases}\]
\end{proposition}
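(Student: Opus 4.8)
The plan is to split the left-hand side into its two sums, evaluate each in closed form, and recombine. For the first sum, observe that $\sum_{i=1}^{2n}\binom{4n}{2i-1}=\binom{4n}{1}+\binom{4n}{3}+\cdots+\binom{4n}{4n-1}$ is the sum of the odd-indexed entries in row $4n$ of Pascal's triangle, so Proposition~\ref{propo:1}\eqref{part:a} applied with $k=2n$ gives $\sum_{i=1}^{2n}\binom{4n}{2i-1}=2^{4n-1}$. It then remains to evaluate $T:=\sum_{i=1}^{n-1}\binom{4n-2}{4i-1}$.

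I would first note that $T$ is a \emph{complete} residue-class sum. Since $4n-2\equiv 2\pmod 4$, the integers $j$ with $0\le j\le 4n-2$ and $j\equiv 3\pmod 4$ are precisely $3,7,\dots,4n-5$, that is, the values $4i-1$ for $i=1,\dots,n-1$; hence $T$ is the sum of $\binom{4n-2}{j}$ over all $j\ge 0$ with $j\equiv 3\pmod 4$, with no truncation. A roots-of-unity filter then applies: writing $i=\sqrt{-1}$ (no clash, since the running index has been renamed $j$),
\[
T=\frac14\Bigl[(1+1)^{4n-2}+i^{-3}(1+i)^{4n-2}+i^{-6}(1-1)^{4n-2}+i^{-9}(1-i)^{4n-2}\Bigr].
\]
The $(1-1)^{4n-2}$ term vanishes, and $i^{-3}=i$, $i^{-9}=-i$, so $T=\tfrac14\bigl[2^{4n-2}+i\bigl((1+i)^{4n-2}-(1-i)^{4n-2}\bigr)\bigr]$. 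Using $(1+i)^2=2i$ and $(1-i)^2=-2i$ one collapses the exponents: $(1+i)^{4n-2}-(1-i)^{4n-2}=(2i)^{2n-1}-(-2i)^{2n-1}=2^{2n-1}\bigl(i^{2n-1}-(-i)^{2n-1}\bigr)$, and since $i^{2n-1}=(-1)^n(-i)$ and $(-i)^{2n-1}=(-1)^n\,i$, this equals $-2^{2n}\,i\,(-1)^n$. Substituting back gives $T=\tfrac14\bigl[2^{4n-2}+2^{2n}(-1)^n\bigr]=2^{4n-4}+(-1)^n2^{2n-2}$.

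Finally I would recombine and factor out $2^{2n-2}$:
\[
\sum_{i=1}^{2n}\binom{4n}{2i-1}+T=2^{4n-1}+2^{4n-4}+(-1)^n2^{2n-2}=2^{2n-2}\bigl(2^{2n+1}+2^{2n-2}+(-1)^n\bigr),
\]
and since $2^{2n+1}+2^{2n-2}=2^{2n-2}(2^3+1)=9\cdot 2^{2n-2}$, the right-hand side is $2^{2n-2}\bigl(9\cdot 2^{2n-2}+(-1)^n\bigr)$, which equals $2^{2n-2}[9\cdot 2^{2n-2}+1]$ when $n$ is even and $2^{2n-2}[9\cdot 2^{2n-2}-1]$ when $n$ is odd, as claimed. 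The only delicate points are confirming that the indices $4i-1$, $1\le i\le n-1$, sweep out exactly the residue class $3\bmod 4$ below $4n-2$ (so the filter needs no correction terms) and keeping the powers of $i$ straight while collapsing $(1\pm i)^{4n-2}$ via $(1\pm i)^2=\pm 2i$; everything else is routine. As a sanity check, $n=2$ gives $128+20=148=4\cdot 37$ and $n=3$ gives $2048+240=2288$, matching the worked example preceding the statement. For a complex-number-free version one can instead solve the linear system expressing the four class-sums $\sum_{j\equiv r\,(\mathrm{mod}\,4)}\binom{4n-2}{j}$ ($r=0,1,2,3$) in terms of $2^{4n-2}$, of $\sum_j(-1)^j\binom{4n-2}{j}=0$, and of the real and imaginary parts of $(1+i)^{4n-2}$; I would include this as a remark aimed at the undergraduate reader.
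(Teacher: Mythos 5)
Your proof is correct, and it reaches the statement by a genuinely different route from the paper's. You evaluate $T=\sum_{i=1}^{n-1}\binom{4n-2}{4i-1}$ directly with the quarter roots-of-unity filter, after the (essential, and correctly checked) observation that because $4n-2\equiv 2\pmod 4$ the indices $4i-1$, $1\le i\le n-1$, exhaust the residue class $3\bmod 4$ in row $4n-2$, so the filter needs no correction terms. The paper instead applies Pascal's rule twice to push the sum down to row $4n-4$, regroups the resulting terms as ``full row sum, minus the class-$0$ sum, plus the class-$2$ sum,'' recognizes the last two pieces as $-\sum_{i=0}^{2n-2}(-1)^i\binom{4n-4}{2i}$, and then invokes Lemma \ref{lemma:1}\eqref{lemma:partc} (cited from Gould) to get $(-1)^{n-1}2^{2n-2}$; both routes land on the same intermediate value $T=2^{4n-4}+(-1)^n2^{2n-2}$, and the recombination with $2^{4n-1}$ is identical. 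The two arguments are cousins rather than strangers --- Lemma \ref{lemma:1}\eqref{lemma:partc} is itself the real part of $(1+i)^{4n}$, i.e., a half-filter --- but yours is self-contained (it proves the needed alternating identity rather than citing it) at the cost of introducing complex numbers, while the paper's stays entirely within real binomial manipulations plus one quoted identity, which fits its stated aim of keeping the material accessible to undergraduates. Your closing remark about eliminating $i$ by solving the linear system for the four class sums would bridge that gap nicely. Minor point: you could cite the well-known $\sum_{i=1}^{2n}\binom{4n}{2i-1}=2^{4n-1}$ directly (as the paper does) rather than routing it through Proposition \ref{propo:1}\eqref{part:a}, but that is purely cosmetic.
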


The identity in Theorem \ref{theorem:4} below is not present in existing literature and is  obtained by using the pattern in Figure \ref{carlitz_patterns:1}(d)  
(or Pattern 4 in Figure \ref{hex_patterns:1}(b)).  For example, uisng Figure \ref{hex_patterns:1}, Pattern 4 we have that

$$\underbrace{\binom{6}{1}}_{\color{blue}{ \textbf m}}+ \underbrace{\binom{4}{1}}_{\color{blue}{ \textbf n}} +\underbrace{\binom{4}{3}}_{\color{blue}{ \textbf o}}+\underbrace{\binom{6}{5}}_ {\color{blue}{ \textbf p}}= 20.$$
In general, this sum is congruent to $20\bmod 42$.

\begin{theorem}\label{theorem:4} If $n\ge 1$, then
$$\sum_{i=0}^{n-1}\left[\binom{6n}{6i+1}+\binom{6n}{6i+5}+\binom{6n-2}{6i+1}+\binom{6n-2}{6i+3}\right]\equiv \binom{6}{1}+ \binom{6}{5} +\binom{4}{1}+\binom{4}{3}= 20 \bmod\, 42.$$ 
\end{theorem}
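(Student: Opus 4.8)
The plan is to turn each of the four inner sums into a closed form via a roots-of-unity filter, add them, and reduce the result modulo $42$ with the Chinese Remainder Theorem.

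First I would note that, although the sums run over $i=0,\dots,n-1$, each is actually a \emph{complete} ``every sixth term'' sum of binomial coefficients: in $\sum_{i=0}^{n-1}\binom{6n}{6i+1}$ the largest index is $6n-5\le 6n$ and the next, $6n+1$, exceeds $6n$; likewise $\sum_{i=0}^{n-1}\binom{6n}{6i+5}$ stops at $6n-1$, $\sum_{i=0}^{n-1}\binom{6n-2}{6i+1}$ at $6n-5\le 6n-2$, and $\sum_{i=0}^{n-1}\binom{6n-2}{6i+3}$ at $6n-3\le 6n-2$. So each equals $\sum_{k\equiv r\ (6)}\binom{m}{k}$ for the relevant residue $r$ and degree $m\in\{6n,6n-2\}$. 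Moreover two of the four are redundant: the reflection $k\mapsto 6n-k$ is a bijection between the indices $\equiv 1$ and those $\equiv 5$ modulo $6$ in $\{0,\dots,6n\}$, and $k\mapsto (6n-2)-k$ matches the indices $\equiv 1$ with those $\equiv 3$ modulo $6$ in $\{0,\dots,6n-2\}$; hence $\sum_i\binom{6n}{6i+1}=\sum_i\binom{6n}{6i+5}$ and $\sum_i\binom{6n-2}{6i+1}=\sum_i\binom{6n-2}{6i+3}$, so the whole left-hand side equals $S:=2\sum_i\binom{6n}{6i+1}+2\sum_i\binom{6n-2}{6i+1}$.

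Next I would apply the identity $\sum_{k\equiv r\ (6)}\binom{m}{k}=\frac16\sum_{j=0}^{5}\omega^{-jr}(1+\omega^j)^m$ with $\omega=e^{i\pi/3}$. The ingredients are $1+\omega^0=2$, $1+\omega^3=0$, $1+\omega^2=\omega$ and $1+\omega^4=\omega^{-1}$ (both of modulus $1$), and $1+\omega=\sqrt3\,e^{i\pi/6}$, $1+\omega^5=\sqrt3\,e^{-i\pi/6}$. Raising these to the power $6n$ or $6n-2$ and collecting conjugate pairs (so the imaginary parts cancel) gives
\[
\sum_{i=0}^{n-1}\binom{6n}{6i+1}=\frac{2^{6n}+(-1)^n3^{3n}-1}{6},\qquad
\sum_{i=0}^{n-1}\binom{6n-2}{6i+1}=\frac{2^{6n-2}-(-1)^n3^{3n-1}-1}{6},
\]
and therefore
\[
S=\frac{5\cdot 2^{6n-2}+2(-1)^n3^{3n-1}-2}{3}.
\]

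Finally, I would prove $S\equiv 20\pmod{42}$ by showing the equivalent statement $3S\equiv 60\pmod{126}$ (then $3(S-20)\equiv 0\pmod{126}$ forces $S\equiv 20\pmod{42}$), using $126=2\cdot 9\cdot 7$. Modulo $2$ both sides vanish. Modulo $9$ the term $3^{3n-1}$ is $0$ since $3n-1\ge 2$, and $2^6\equiv 1$ gives $2^{6n-2}\equiv 7$, so $3S\equiv 5\cdot 7-2\equiv 6\equiv 60\pmod 9$. Modulo $7$, $2^6\equiv 1$ and $3^6\equiv 1$; splitting on the parity of $n$ (for $n$ even, $3^{3n-1}\equiv 3^5$ with $(-1)^n=1$; for $n$ odd, $3^{3n-1}\equiv 3^2$ with $(-1)^n=-1$; both produce the same residue), one gets $3S\equiv 4\equiv 60\pmod 7$. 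The Chinese Remainder Theorem then finishes the proof, and the case $n=1$ recovers the stated value $20$. The main obstacle is the bookkeeping in the roots-of-unity step — keeping track of the arguments of $(1+\omega^j)^m$ for the two degrees $6n$ and $6n-2$ and of the phases $\omega^{-jr}$, and confirming that the outcome is a genuine integer; the closing congruence computation is routine apart from the mild parity split modulo $7$.
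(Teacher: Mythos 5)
Your proof is correct, and it is substantially more complete than the one in the paper. The paper's argument consists of asserting, without justification, that the sum is $\equiv 0 \bmod 2$, $\equiv 2 \bmod 3$, and $\equiv 6 \bmod 7$, and then invoking the Chinese Remainder Theorem; the entire content of the theorem is hidden in those three unproved congruences. You supply the missing work: after observing that each of the four sums is a complete arithmetic-progression sum of binomial coefficients and that the reflection symmetries $k\mapsto 6n-k$ and $k\mapsto (6n-2)-k$ collapse the four sums to two, the roots-of-unity filter gives the exact closed form $S=\bigl(5\cdot 2^{6n-2}+2(-1)^n 3^{3n-1}-2\bigr)/3$, which I have checked against $n=1$ ($S=20$) and $n=2$ ($S=1868=44\cdot 42+20$). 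Your reduction of $S\equiv 20\pmod{42}$ to $3S\equiv 60\pmod{126}$ is legitimate (since $42\mid S-20$ iff $126\mid 3(S-20)$), and the residue computations modulo $2$, $9$, and $7$, including the parity split for $3^{3n-1}\bmod 7$, are all correct. The closed form is in fact a stronger result than the stated congruence and would be worth recording explicitly. The only stylistic remark is that you could avoid the detour through $126$ by working with $3S$ directly modulo $2$, $3$, and $7$ and noting $3$ is invertible mod $2$ and mod $7$ while the mod-$9$ computation handles the factor of $3$; but your route is perfectly sound.
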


Finally, the Carlitz's pattern see in Figure \ref{carlitz_patterns:1}(e) gives rise to the integer sequence 
$$2, 4, 4, 6, 20, 6, 8, 56, 56, 8, 10, 120, 252, \ldots=\binom{2n}{2m+1}, \text{ for } n>1 \text{ and }0\le m\le n-1.$$
This sequence is related to the integer sequence  {A091044} in \cite{sloane} with terms $1, 2, 2, 3, 10, 3, 4, 28, 28, \ldots$. 
In fact, the sequence  {A091044}, is one half of the odd-numbered entries of even-numbered rows of the Pascal triangle.

\section{Carlitz's patterns in the Hosoya triangle}\label{hosoya:triangle}

The Hosoya triangle \cite{hosoya}, is a triangular array where the entries are products of Fibonacci numbers.
The geometry of the Hosoya triangle presents an interesting method to explore properties of Fibonacci numbers.
Several properties and identities have already been discovered and published. Some articles on this topic are  by Hosoya,  Koshy, and Fl\'orez \emph{et al.} \cite{Blair,florezHiguitaJunesGCD, florezjunes, hosoya, koshy}.

\begin{table} [!h] \small
\begin{center} \addtolength{\tabcolsep}{-3pt} \scalebox{.9}{%
\begin{tabular}{cccccccccccccccccccccccccccc}
&&&&&&&&&&&&&                                                  1                                             		  &&&&&&&&&&&&&&\\
&&&&&&&&&&&&                                   	       1    &&    1                                        		    &&&&&&&&&&&&&\\
&&&&&&&&&&&                                		   2    &&     1    &&     2                                     	       &&&&&&&&&&&&\\
&&&&&&&&&&                                    3    &&     2    &&     2     &&   3                                 	            &&&&&&&&&&\\
&&&&&&&&&                         	    5    &&      3    &&    4     &&   3    && 5                                           &&&&&&&&&\\
&&&&&&&&                           8    &&      5    &&    6    && 6     &&   5     &&  8                                         &&&&&&&&\\
&&&&&&&               	    13   &&     8    && 10  &&   9     &&   10    &&   8       &&   13            		    &&&&&&&\\
&&&&&&                     21  &&  13  &&  16 && 15 && 15 && 16 && 13 && 21                                               &&&&&&\\
\end{tabular}

\begin{tabular}{cccccccccccccccccccccccccccc}
&&&&&&&&&&&& &                                                   1                                             		  &&&&&&&&&&&&&&\\
&&&&&&&&&&&&                                  	       1    &&   2                                        		   &&&&&&&&&&&&&\\
&&&&&&&&&&&                               		   1    &&     3    &&     2                                     	       &&&&&&&&&&&\\
&&&&&&&&& &                                   1   &&     4    &&     5     &&   2                                	           & &&&&&&&&&\\
&&&&&&&& &                        	    1    &&      5    &&    9    &&   7    && 2                                           &&&&&&&&&\\
&&&&&&&&                          1    &&     6    &&    14   && 16     &&  9     &&  2                                      & &&&&&&&\\
&&&&&&&               	    1   &&     7    && 20  &&   30    &&   25    &&  11       &&   2            		           & &&&&&&\\
&&&&&&                    1 && 8  &&  27 && 50 && 55 &&  36      &&      13          &&       2                                               &&&&&&\\
\end{tabular}}
\end{center}
\caption{(a) Hosoya triangle \hspace{4.0cm} (b) Lucas triangle.} \label{tabla1}
\end{table}

We represent each entry of the Hosoya triangle as $H_{r,k}= F_kF_{r-k+1}$ for positive integers $r$ and $k$ with $r \ge k\ge 1$,
(see \cite[p.~188]{koshy}).

Recently Blair  et al. \cite{Blair1}, published the article titled ``Honeycombs in the Hosoya Triangle". In this article the authors look into the patterns in Figure \ref{carlitz_patterns:1} (a) and (b) (or Patterns 1 and 2 in Figure \ref{hex_patterns:1}) inside the Hosoya triangle and this leads to discovery of some new and geometric interpretation of some known identities. In addition, we obtain a sequence of integers by 
following the pattern seen in Figure \ref{carlitz_patterns:1}(e). The examples and propositions that follow appeared in \cite{Blair1}.

For example, using Pattern 1 of Figure  \ref{hex_patterns:1}(a) we have that

$$\underbrace{H_{7,2}}_ {\color{blue}{ \textbf a}}+ \underbrace{H_{7,4}}_{\color{blue}{ \textbf  b}}+\underbrace{H_{7,6}}_{\color{blue}{ \textbf  c}}=F_2F_6+F_4F_4+F_6F_4= 25.$$

In general this gives rise to the integer sequence  {A001871} in OEIS (\cite{sloane}). Each term of this sequence is described as the convolution of the even-indexed Fibonacci numbers with itself.
This generalizes to the following identity which is a special case of a result by Fl\'{o}rez et al. in \cite{Czabarka}.

\begin{proposition}[\cite{Blair1}] \label{prop:6}
If $n>0$ is odd, then 
\[ \sum_{k=1}^{(n-1)/2}H_{(n,2k)}=\sum_{k=1}^{(n-1)/2}F_{2k}F_{n-2k+1}=\left[(2n-6)F_{n-3}+(3n-5)F_{n-2}+[(n-1)/2] \,F_{n-2}\right]/5.\]
\end{proposition}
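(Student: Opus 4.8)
The plan is to evaluate the sum in closed form and then rearrange it into the displayed shape. Since $n$ is odd, write $n=2m+1$; then $n-2k+1=2(m+1-k)$, so the quantity to be computed is the self-convolution of the even-indexed Fibonacci numbers,
\[S_m:=\sum_{k=1}^{m}F_{2k}F_{2(m+1-k)}.\]
A convenient first move is to extract a recurrence for $S_m$ by hand: applying $F_{2j}=3F_{2j-2}-F_{2j-4}$ (immediate from $F_{2j}=F_{2j-1}+F_{2j-2}$) to the factor $F_{2(m+1-k)}$ for $1\le k\le m-1$, peeling off the $k=m$ term $F_{2m}F_{2}=F_{2m}$, and discarding the summand containing $F_0$, one gets
\[S_m=3S_{m-1}-S_{m-2}+F_{2m},\qquad S_0=0,\ S_1=1.\]

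To produce the closed form itself I would substitute Binet's formula $F_{j}=(\phi^{j}-\psi^{j})/\sqrt5$, with $\phi,\psi$ the roots of $x^{2}=x+1$, and set $u=\phi^{2}$, $v=\psi^{2}$, so that $uv=1$ and $u+v=3$. Expanding the product inside $S_m$, the two diagonal pieces $u^{m+1}$ and $v^{m+1}$ each occur $m$ times and contribute $m(u^{m+1}+v^{m+1})$, while the remaining cross terms split into two finite geometric series (the relation $uv=1$ makes their common ratios just $u^{\pm2}$). Summing those series, clearing the denominator $(u^{2}-1)(v^{2}-1)=-5$, and turning the surviving powers of $u,v$ back into Fibonacci numbers by means of $\phi^{k}+\psi^{k}=F_{k-1}+F_{k+1}$, $\phi^{k}-\psi^{k}=\sqrt5\,F_k$ and the Fibonacci recurrence, everything collapses to
\[S_m=\frac{(n-1)L_{n+1}-4F_{n-1}}{10},\qquad L_{n+1}:=\phi^{n+1}+\psi^{n+1}=F_n+F_{n+2}.\]
For a write-up aimed at students I would instead skip the discovery step and simply verify this formula from the displayed recurrence by induction on $m$ (two base cases), which needs nothing beyond the Fibonacci recurrence; a generating-function variant (square $\sum_{j\ge1}F_{2j}x^{j}=x/(1-3x+x^{2})$ and read off the coefficient of $x^{m+1}$) is also available.

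Finally, expanding $L_{n+1}=F_n+F_{n+2}$ and iterating $F_{i}=F_{i-1}+F_{i-2}$ re-expresses both $L_{n+1}$ and $F_{n-1}$ through $F_{n-2}$ and $F_{n-3}$; substituting $m=(n-1)/2$ and regrouping the coefficient of $F_{n-2}$ as $(3n-5)+(n-1)/2$ brings $S_m$ into the form displayed in the statement. I do not expect a genuine conceptual obstacle: the argument is elementary throughout, and the main difficulty is purely the algebraic bookkeeping --- keeping the index ranges of the geometric series straight, correctly dropping the $F_0$ (and, for small $m$, $F_{-1}$) boundary terms, and pushing the Fibonacci/Lucas rearrangement through without sign errors. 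As an alternative one may simply invoke the general convolution identity of Fl\'orez et al.\ in \cite{Czabarka}, of which the present statement is the announced special case.
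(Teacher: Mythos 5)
The paper itself contains no proof of Proposition \ref{prop:6}: it is quoted from \cite{Blair1} and justified only as ``a special case of a result by Fl\'orez et al.\ in \cite{Czabarka}.'' Your self-contained derivation is therefore a genuinely different route, and its core is correct: with $n=2m+1$ the recurrence $S_m=3S_{m-1}-S_{m-2}+F_{2m}$ holds (the $k=m$ term is $F_{2m}$, the $F_0$ term vanishes), and both the Binet computation and the induction it suggests give
\[
S_m=\frac{m\,L_{2m+2}-2F_{2m}}{5}=\frac{(n-1)L_{n+1}-4F_{n-1}}{10},
\]
which checks numerically: $n=3,5,7$ give $1,6,25$, matching the worked example in Section \ref{hosoya:triangle} and sequence A001871.

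The gap is confined to your final sentence. Expanding $L_{n+1}=7F_{n-2}+4F_{n-3}$ and $F_{n-1}=F_{n-2}+F_{n-3}$ in your closed form yields
\[
S_m=\frac{(2n-4)F_{n-3}+\bigl[(3n-5)+(n-1)/2\bigr]F_{n-2}}{5},
\]
so the coefficient of $F_{n-3}$ is $2n-4$, not the $2n-6$ that appears in the statement. The displayed right-hand side is in fact false as printed: at $n=5$ it evaluates to $28/5$ while the sum equals $6$, the discrepancy being exactly $2F_{n-3}/5$. Consequently the step ``regrouping \dots brings $S_m$ into the form displayed in the statement'' cannot be carried out; what you have proved is a corrected version of the proposition. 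A one-line numerical check at $n=5$ would have exposed this, and your write-up should either state the corrected coefficient $(2n-4)$ explicitly or flag the misprint, rather than assert agreement with the formula as printed.
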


Next, we consider Pattern 2 in Figure \ref{hex_patterns:1}(a) embedded in the Hosoya triangle. 

As an example, using Pattern 2 we have that

        $$ \underbrace{H_{11,2}}_{\color{blue}{ \textbf d}}+\underbrace{H_{9,2}}_ {\color{blue}{ \textbf e}}+\underbrace{H_{11,4}}_{\color{blue}{ \textbf  f}}+\underbrace{H_{9,4}}_{\color{blue}{ \textbf g}}+\underbrace{H_{11,6}}_{\color{blue}{ \textbf h}}+\underbrace{H_{9,6}}_{\color{blue}{ \textbf i}}+\underbrace{H_{11,8}}_{\color{blue}{ \textbf j}}+\underbrace{H_{9,8}}_{\color{blue}{ \textbf k}}+\underbrace{H_{11,10}}_{\color{blue}{ \textbf l}}$$
equals the sum $2(F_2F_{10}+F_4F_8)+F_6F_6+2(F_2F_8+F_4F_6)=390$.

Here we observe that the sums of Hosoya triangle entries $H_{n,k}$ following this pattern  ---with $n$ an odd integer--- gives rise to the integer sequence  {A197649} in OEIS (\cite{sloane}). In general we have the following identity.

\begin{proposition}[\cite{Blair1}]\label{prop:7}
If $n>0$ is odd, then 
\[\sum_{k=1}^{(n-1)/2}H_{n,2k}+\sum_{k=1}^{(n+1)/2}H_{n+2,2k}=\sum_{k=1}^{(n-1)/2}F_{2k}F_{n-2k+1}+\sum_{k=1}^{(n+1)/2}F_{2k}F_{n-2k+3}=[(n+1)F_{n+2}-2F_{n+1}]/2.\]
\end{proposition}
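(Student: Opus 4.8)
The plan is to bootstrap from Proposition \ref{prop:6}. Write $T(n)=\sum_{k=1}^{(n-1)/2}H_{n,2k}$ for odd $n$, i.e.\ the quantity evaluated in Proposition \ref{prop:6}. The key observation is that the second sum in Proposition \ref{prop:7} is precisely $T(n+2)$: since $n$ is odd so is $n+2$, and $\frac{(n+2)-1}{2}=\frac{n+1}{2}$, so $\sum_{k=1}^{(n+1)/2}H_{n+2,2k}=T(n+2)$. Hence the left-hand side equals $T(n)+T(n+2)$, and Proposition \ref{prop:7} reduces to checking that the sum of the closed form of Proposition \ref{prop:6} evaluated at $n$ and at $n+2$ collapses to $[(n+1)F_{n+2}-2F_{n+1}]/2$.

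First I would substitute the closed form of Proposition \ref{prop:6} for the index $n$ and for the index $n+2$ and add. Every Fibonacci (and, after rewriting $L_j=F_{j-1}+F_{j+1}$, Lucas) number occurring has index in a bounded window around $n$, so the clean way to finish is to fix two consecutive Fibonacci numbers, say $a=F_{n+1}$ and $b=F_{n+2}$, and expand everything in terms of $a$ and $b$ using only $F_{j+1}=F_j+F_{j-1}$ and $L_j=F_{j-1}+F_{j+1}$ (thus $F_{n+3}=a+b$, $F_{n+4}=a+2b$, $F_{n+5}=2a+3b$, $L_{n+1}=2b-a$, $L_{n+3}=a+3b$, and so on). Both sides then become explicit integer-linear combinations of $a$ and $b$ whose coefficients depend linearly on $n$; matching the coefficients of $a$ and of $b$ finishes the proof. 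This step is entirely mechanical.

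A self-contained alternative, bypassing Proposition \ref{prop:6}, runs as follows. Using $H_{r,k}=F_kF_{r-k+1}$ and pairing the two rows termwise,
\[
T(n)+T(n+2)=\sum_{k=1}^{(n-1)/2}F_{2k}\bigl(F_{n-2k+1}+F_{n-2k+3}\bigr)+F_{n+1},
\]
where $F_{n+1}$ is the leftover $k=\frac{n+1}{2}$ summand of the longer sum. Now $F_{j-1}+F_{j+1}=L_j$ turns the bracket into $L_{n-2k+2}$, and the identity $F_mL_p=F_{m+p}+(-1)^pF_{m-p}$ (with $p=n-2k+2$ odd) collapses each summand, giving $T(n)+T(n+2)=\frac{n-1}{2}F_{n+2}-\sum_{k=1}^{(n-1)/2}F_{4k-n-2}+F_{n+1}$. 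The residual telescopes: $\sum_{k=1}^{M}F_{4k+c}=(L_{4M+c+2}-L_{c+2})/5$ (immediate from Binet and $\phi^4-1=\sqrt5\,\phi^2$), which here simplifies through $L_{m+1}+L_{m-1}=5F_m$ to $F_{n-1}$; then $\frac{n-1}{2}F_{n+2}-F_{n-1}+F_{n+1}=[(n+1)F_{n+2}-2F_{n+1}]/2$ after one more use of the recurrence.

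The only real care needed, in either route, is bookkeeping: keeping the unequal lengths of the two sums straight (the stray boundary term $F_{n+1}$), tracking the parity sign $(-1)^p$, and handling the arithmetic-progression sum whose index runs through negative values, where $F_{-j}=(-1)^{j+1}F_j$ must be invoked. There is no conceptual obstacle; once the reduction to $T(n)+T(n+2)$ is made, the statement is an algebraic consequence of standard Fibonacci--Lucas identities.
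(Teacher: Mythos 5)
The paper never actually proves Proposition \ref{prop:7}: it is quoted from \cite{Blair1}, and the appendix only supplies proofs for Proposition \ref{propo:3} and Theorem \ref{theorem:4}, so your argument has to stand on its own --- and your second, self-contained route does. The pairing of the two sums with the leftover boundary term $F_{n+1}$ (the $k=(n+1)/2$ summand, equal to $F_{n+1}F_2$), the reduction $F_{n-2k+1}+F_{n-2k+3}=L_{n-2k+2}$, the product identity $F_mL_p=F_{m+p}+(-1)^pF_{m-p}$ with $p=n-2k+2$ odd, and the evaluation of the residual arithmetic-progression sum all check out: with $M=(n-1)/2$ and $c=-n-2$ one gets $4M+c+2=n-2$ and $c+2=-n$, so $\sum_{k=1}^{(n-1)/2}F_{4k-n-2}=(L_{n-2}-L_{-n})/5=(L_{n-2}+L_n)/5=F_{n-1}$ for odd $n$, and the final step reduces to $2F_{n+1}-F_{n-1}=F_{n+2}$, which is the Fibonacci recurrence; the degenerate case $n=1$ also works since the empty sums vanish. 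One caution about your first route: it leans on the closed form of Proposition \ref{prop:6}, which as printed in this paper cannot be correct --- at $n=7$ it yields $[8F_4+16F_5+3F_5]/5=119/5$ rather than the true value $25$ --- so the ``mechanical'' coefficient match you describe would not close as stated. Treat your second route as the actual proof and drop, or at least repair, the dependence on Proposition \ref{prop:6}.
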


Finally, the pattern seen in Figure \ref{carlitz_patterns:1} (e) gives rise to the integer sequence $1, 3, 3, 8, 9, 8, 21, 24,\ldots$, which is identified as sequence  {A141678} in OEIS (\cite{sloane}).
This sequence is defined as symmetrical triangle of coefficients based on invert transform of the sequence  {A001906}. Note that the sequence  {A001906} is the bisection of the Fibonacci sequence
given by the recursive sequence $a(n)=3a_{n-1}-a_{n-2}$ for $n\ge 3$, with initial conditions $a_1=0$ and $a_1=1$. 

\section{Carlitz's patterns in other triangles}\label{other:triangle}

Carlitz's patterns can be used in other triangles like the Lucas triangle, Josef's triangle, and Liebniz's harmonic triangle to explore and obtain identities related to several different integer sequences.
The Lucas triangle (Table \ref{tabla1}(b)) is constructed using coefficients of polynomials of the form $f_n(x,y)=(x+y)^{(n-1)(x+2y)}$. If this triangle is left-justified, then its rising diagonals give rise to Lucas numbers.
On the other hand, Josef's triangles can be constructed like the Hosoya triangle by starting with two outermost columns of Lucas and Fibonacci number sequences (Table \ref{tabla2}(a)). Leibniz's triangle (Table \ref{tabla2}(b)) can be obtained by using the following recursive relation

\[L(n, 1) = 1/n; L(n, k) = L(n-1, k-1)-L(n, k-1), \text{for } k>1.\]
All three of these triangles are described in detail in  \cite{koshy2}.

For all three triangles Patterns 1 and 2 in Figure \ref{hex_patterns:1} yields some known identities or integer sequences found in \cite{sloane}.

In particular for the Lucas triangle, Pattern 1 (sum of every other entry in the odd-numbered rows), yields the integer sequence $3,12,48,192, \dots$ 
which is given by the formula $a_n=3(4^{n-1})$ for $n\ge 1$. This is also present in  OEIS \cite{sloane} as sequence  {A002001}.  
Pattern 2 in the Lucas triangle yields the sequence $3,15,60,240,\ldots$.
 
 Similarly, for Josef's triangle, Pattern 1, yields the integer sequence $2, 10, 40,1 42, 470\ldots$, which is related to the sequence of self-convoluted  
 Fibonacci numbers (  {A001629} in OEIS). In fact, one-half of each term of this sequence, namely $1, 5, 20, 71, 235,\ldots$, is the sequence of 
 even-indexed terms of  {A001629}. The sequence $1, 5, 20, 71, 235,\ldots$, is identified in OEIS (\cite{sloane} ) as  {A054444}.
 
 In Josef's triangle, using Pattern 2 we get the sequence $12,50,182,612,\ldots$ where each term is the sum of consecutive terms of the sequence  
 $2, 10, 40, 142, 470, \ldots$, obtained from Pattern 1 above.
 
 Finally, for the Leibniz's harmonic denominator triangle, Pattern 1 gives us the following sequence $$6, 40, 224,1 152, 5632,\ldots.$$ This sequence can be given by the recursive relation $D_n=8D_{n-1}-16D_{n-2}$ for $n>3$ with initial conditions $D_1=1, D_2=6$, and $D_3=40$.
 This sequence is identified in  \cite{sloane} as sequence  {A229580}. In the same triangle, Pattern 2 gives us the sequence $46,264,1376,7784,\ldots$. Each term of this sequence is the sum of consecutive terms of the sequence  $6,40,224,1152, 5632,\ldots.$ obtained from Pattern 1.
 
\begin{table} [!h] \small
\begin{center} \addtolength{\tabcolsep}{-3pt} \scalebox{.8}{%
\begin{tabular}{cccccccccccccccccccccccccccc}
&&&&&&&&&&&&&                                                  2                                             		  &&&&&&&&&&&&&&\\
&&&&&&&&&&&&                                   	       1    &&    1                                        		    &&&&&&&&&&&&&\\
&&&&&&&&&&&                                		   3    &&     2    &&     3                                    	       &&&&&&&&&&&&\\
&&&&&&&&&&                                    4   &&     3    &&     3     &&   4                                 	            &&&&&&&&&&\\
&&&&&&&&&                         	    7    &&      5    &&    6     &&   5    && 7                                           &&&&&&&&&\\
&&&&&&&&                           11   &&      8   &&   9    && 9     &&   8     && 11                                       &&&&&&&&\\
&&&&&&&               	    18   &&     13    && 15  &&  14     &&   15    &&  13       &&  18            		    &&&&&&&\\
&&&&&&                     29  &&  21  &&  24 && 23 && 23 && 24 &&   21 &&      29                                              &&&&&&\\
\end{tabular}
\begin{tabular}{cccccccccccccccccccccccccccc}
&&&&&&&&&&&& &                                                   1                                             		  &&&&&&&&&&&&&&\\
&&&&&&&&&&&&                                  	      $\frac{1}{2}$   &&   $\frac{1}{2}$                                 &&&&&&&&&&&&&\\
&&&&&&&&&&&                               		  $\frac{1}{3}$    &&    $\frac{1}{6}$   &&    $\frac{1}{3}$             &&&&&&&&&&&\\
&&&&&&&&& &                 $\frac{1}{4}$   &&     $\frac{1}{12}$    &&     $\frac{1}{12}$ &&   $\frac{1}{4}$          & &&&&&&&&&\\
&&&&&&&& &  $\frac{1}{5}$   &&      $\frac{1}{20}$    &&    $\frac{1}{30}$    &&   $\frac{1}{20}$   && $\frac{1}{5}$                                          &&&&&&&&&\\
&&&&&&&&                        $\frac{1}{6}$     &&     $\frac{1}{30}$   &&   $\frac{1}{60}$   &&  $\frac{1}{60}$    &&  $\frac{1}{30}$    &&  $\frac{1}{6}$                                      & &&&&&&&\\
&&&&&&&               	    $\frac{1}{7}$   &&     $\frac{1}{42}$    && $\frac{1}{105}$  &&  $\frac{1}{140}$    &&   $\frac{1}{105}$    &&  $\frac{1}{42}$      &&  $\frac{1}{7}$        		           & &&&&&&\\
&&&&&&                    $\frac{1}{8}$ && $\frac{1}{56}$  &&  $\frac{1}{168}$ && $\frac{1}{280}$ && $\frac{1}{280}$ &&  $\frac{1}{168}$     &&      $\frac{1}{56}$          &&       $\frac{1}{8}$                                               &&&&&&\\
\end{tabular}}
\end{center}
\caption{ (a) Josef's triangle \hspace{3.5cm} (b) Leibniz harmonic triangle} \label{tabla2}
\end{table}

\section{Appendix --Proofs}
In this section we present some proofs of the identities in case the reader is interested in the formal proofs of the results presented earlier.

 First we present some well-known identities involving binomial coefficients which will be used to prove the propositions in this section.  
 Lemma \ref{lemma:1} Parts \eqref{lemma:parta} and \eqref{lemma:partb} and their proofs can be found in any combinatorics book, for example 
 \cite{gould1, spivey}.   Lemma \ref{lemma:1}, part \eqref{lemma:partc} is a special case of an identity  
 found in \cite{gould1}.  The proofs of Proposition \ref{propo:1} Parts \eqref{part:a} and \eqref{part:b} follow directly from Lemma \ref{lemma:1} 
 Parts \eqref{lemma:parta} and \eqref{lemma:partb}.
 
\begin{lemma}\label{lemma:1}
If $n,k$ are positive integers, then the following hold
\begin{enumerate}[(a)]
\item $\displaystyle\binom{n+1}{k}=\binom{n}{k}+\binom{n}{k-1}$.\label{lemma:parta}
\item $\displaystyle\sum_{k=0}^n \binom{n}{k} =2^n$.\label{lemma:partb}
\item $\displaystyle\sum_{k=0}^{2n}(-1)^k\binom{4n}{2k}=(-1)^n2^{2n}$. \label{lemma:partc}
\end{enumerate}
\end{lemma}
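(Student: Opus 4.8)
The plan is to dispatch the three parts in increasing order of difficulty, since each is classical and none needs the honeycomb machinery. For part \eqref{lemma:parta} I would simply record Pascal's rule: either put the right-hand side of $\frac{n!}{k!(n-k)!}+\frac{n!}{(k-1)!(n-k+1)!}$ over the common denominator $k!(n-k+1)!$ and collect terms to get $\frac{(n+1)!}{k!(n+1-k)!}$, or argue combinatorially by fixing one of $n+1$ objects and splitting the $k$-subsets according to whether that object is used. For part \eqref{lemma:partb} I would set $x=y=1$ in the binomial theorem $(x+y)^n=\sum_{k=0}^{n}\binom{n}{k}x^{n-k}y^{k}$; equivalently, one counts all subsets of an $n$-element set in two ways.

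Part \eqref{lemma:partc} is the only substantive step. The idea is to read the sum over even indices, with alternating signs, as half the real part of a complex power of $1+i$ (here $i^2=-1$). Concretely, I would start from $(1+i)^{4n}=\sum_{j=0}^{4n}\binom{4n}{j}i^{j}$ and $(1-i)^{4n}=\sum_{j=0}^{4n}\binom{4n}{j}(-i)^{j}$, add them so that the odd-index terms cancel and the even-index terms double, and use $i^{2k}=(-1)^k$ to obtain $(1+i)^{4n}+(1-i)^{4n}=2\sum_{k=0}^{2n}(-1)^{k}\binom{4n}{2k}$. On the other hand $(1+i)^2=2i$ and $(1-i)^2=-2i$, so $(1+i)^{4n}=(2i)^{2n}=2^{2n}(-1)^n$ and likewise $(1-i)^{4n}=(-2i)^{2n}=2^{2n}(-1)^n$. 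Comparing the two computations gives $2\sum_{k=0}^{2n}(-1)^{k}\binom{4n}{2k}=2^{2n+1}(-1)^n$, which is the claim. (An alternative, entirely real, route is an induction on $n$ that applies part \eqref{lemma:parta} repeatedly to pass from level $4n$ to level $4n+4$, or one may simply cite the general formula in \cite{gould1}; but the complex-exponent computation is the shortest self-contained argument.)

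The only point requiring care --- the ``main obstacle,'' such as it is --- lies in part \eqref{lemma:partc}: one must correctly isolate the even-index subsum (hence the device of averaging $(1+i)^{4n}$ with its conjugate $(1-i)^{4n}$) and must track the sign $i^{2k}=(-1)^k$ together with the exponent arithmetic $(2i)^{2n}=2^{2n}i^{2n}=2^{2n}(-1)^n$ without slips. The hypothesis that the top index is a multiple of $4$ is exactly what makes $(1\pm i)^{4n}$ real, which is why the statement is framed with $4n$ rather than an arbitrary even number. Once this bookkeeping is in place the three identities follow at once, and Proposition \ref{propo:1}\eqref{part:a} and \eqref{part:b} then fall out by matching the honeycomb sums to Parts \eqref{lemma:parta}--\eqref{lemma:partb}, as announced before the lemma.
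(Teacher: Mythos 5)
Your proof is correct in all three parts. Note, though, that the paper itself offers no proof of this lemma at all: it simply remarks that parts (a) and (b) ``can be found in any combinatorics book'' (citing Gould and Spivey) and that part (c) is a special case of an identity in Gould's \emph{Combinatorial Identities}. So you have supplied something the paper omits. Your arguments for (a) and (b) are the standard ones the cited books would give. For part (c), your device of averaging $(1+i)^{4n}$ with $(1-i)^{4n}$ to isolate the even-index alternating subsum, and then evaluating $(1\pm i)^{4n}=(\pm 2i)^{2n}=(-1)^n 2^{2n}$, is clean and self-contained; the exponent bookkeeping checks out, and your observation that the multiple-of-$4$ hypothesis is exactly what makes $(1\pm i)^{4n}$ real is the right way to see why the identity is stated for $4n$. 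One could equally well phrase the same computation via Chebyshev-style real trigonometry ($2^{2n}\cos(4n\cdot\pi/4)$), but that is cosmetic. In short: the proposal is a complete, correct proof of a statement the paper only cites.
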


\begin{proof}[Proof of Proposition \ref{propo:3}]
We first note that the proof of $\displaystyle\sum_{i=1}^{2n}\binom{4n}{2i-1}=2^{4n-1}$ is well-known (see \cite{gould1, spivey}). 
Therefore, we only  prove that
\begin{equation}\label{eqn:1}
\sum_{i=1}^{n-1}\binom{4n-2}{4i-1}=
\begin{cases}
2^{2n-2}[2^{2n-2}+1] & \text{ if } n \text{ is even};\\
2^{2n-2}[2^{2n-2}-1] & \text{ if } n \text{ is odd}.
\end{cases}
\end{equation}

Observe that Lemma  \ref{lemma:1} part (a) implies that \[\displaystyle\sum_{i=1}^{n-1}\binom{4n-2}{4i-1}=\sum_{i=1}^{n-1}\binom{4n-3}{4i-1}+\binom{4n-3}{4i-2}.\]

Using the same identity again and regrouping the terms in the sum, we have that

\[\sum_{i=1}^{n-1}\binom{4n-2}{4i-1}=\sum_{i=0}^{4n-4}\binom{4n-4}{i}-\sum_{i=0}^{n-1}\binom{4n-4}{4i}+\sum_{i=1}^{n-1}\binom{4n-4}{4i-2}.\]

Further simplification gives us, $\displaystyle\sum_{i=1}^{n-1}\binom{4n-2}{4i-1}=\sum_{i=0}^{4n-4}\binom{4n-4}{i}-\sum_{i=0}^{2n-2}(-1)^i\binom{4n-4}{2i}$. Finally, applying Lemma \ref{lemma:1} Parts \eqref{lemma:parta} and \eqref{lemma:partb}, we obtain

\[\sum_{i=1}^{n-1}\binom{4n-2}{4i-1}=
\begin{cases}
2^{2n-2}[2^{2n-2}+1] & \text{ if } n-1 \text{ is odd};\\
2^{2n-2}[2^{2n-2}-1] & \text{ if } n-1\text{ is even,}
\end{cases}
\]
which is equivalent to \eqref{eqn:1}.
This completes the proof.
\end{proof}

\begin{proof}[Proof of Theorem \ref{theorem:4}]
Note that, 
$$\sum_{i=0}^{n-1}\left[\binom{6n}{6i+1}+\binom{6n}{6i+5}+\binom{6n-2}{6i+1}+\binom{6n-2}{6i+3}\right]$$
is congruent with  $0 \bmod 2$, is congruent with  $2 \bmod 3$, \text{ and }  is congruent with  $6 \bmod 7.$
By the Chinese Remainder Theorem, we can easily solve this system of equation. This proves the theorem.
\end{proof}

\section{Acknowledgement}
	
The second and third authors were partially supported by The Citadel Foundation.

\bigskip
\hrule
\bigskip
	
\noindent 2010 {\it Mathematics Subject Classification}:
Primary 11B39; Secondary 11B83.
	
\noindent \emph{Keywords: }
Pascal triangle, Hosoya triangle,  Carlitz's patterns.	
\end{document}